\documentclass[11pt,a4paper]{article}
\usepackage[utf8]{inputenc}
\usepackage[T1]{fontenc}
\usepackage[english]{babel}
\usepackage{amsmath,amssymb,amsthm,mathtools}
\usepackage{mathrsfs}
\usepackage[margin=1in]{geometry}
\usepackage{enumitem}
\usepackage{xcolor}
\usepackage{hyperref}

\hypersetup{
  colorlinks=true,
  linkcolor=blue!70!black,
  citecolor=green!60!black,
  urlcolor=red!60!black,
  pdftitle={Appell Sequences in Analytic Number Theory: Shifted Expansions and Dirichlet L-Functions},
  pdfauthor={Abdelmejid Bayad}
}

\theoremstyle{plain}
\newtheorem{theorem}{Theorem}[section]

\newtheorem{lemma}[theorem]{Lemma}
\newtheorem{corollary}[theorem]{Corollary}

\theoremstyle{remark}
\newtheorem{remark}[theorem]{Remark}

\begin{document}

\title{Appell Sequences in Analytic Number Theory:\\
Shifted Expansions and Dirichlet \(L\)-Functions}

\author{Abdelmejid Bayad\\
\small LaMME, CNRS UMR 8071, Universit\'e Paris-Saclay\\
\small \'Evry, France\\
\small \texttt{abdelmejid.bayad@univ-evry.fr}}

\date{\ }

\maketitle

\begin{abstract}
We place the Appell shift theorem for asymptotic expansions inside a Borel--Laplace framework and apply it to two classical functions of analytic number theory: the Hurwitz zeta function \(\zeta(m,q)\) and the Lerch transcendent \(\Phi(z,m,a)\). The resulting shifted expansions have coefficients given by Appell polynomials evaluated at the shift, with remainders controlled uniformly on compact sets. The generating sequences are identified explicitly in terms of the Taylor coefficients of the Borel kernels.

In a second direction, we extend the Appell--Mellin framework of Navas, Ruiz and Varona to Dirichlet \(L\)-functions. For a primitive non-principal character \(\chi\) modulo \(q\), we construct an explicit Appell--Mellin generating function \(A_\chi(t)\) whose associated (possibly degenerate) Appell sequence satisfies \(R_n^\chi(x) = L(-n,\chi,x)\), and we prove the parity identity \(A_\chi(t) = -\chi(-1)A_\chi(-t)\). We further establish five structural properties of the polynomials \(R_n^\chi\): identification with generalized Bernoulli polynomials, a difference equation, an explicit formula, a Fourier expansion, and the trivial zeros of \(L(s,\chi)\). The generating function \(A_\chi\) is, up to the elementary factor \(-1/t\), the classical generating function of generalized Bernoulli polynomials; the contribution of the paper is a systematic Appell--Mellin reformulation of these classical facts, together with a corrected form of the Fourier expansion and an explicit reformulation of the trivial zeros within the Appell--Mellin formalism.
\end{abstract}

\medskip
\noindent\textbf{Keywords:} Appell polynomials, Borel--Laplace transform, Hurwitz zeta function, Lerch transcendent, shifted asymptotic expansions, Dirichlet \(L\)-functions, Appell--Mellin sequences, generalized Bernoulli polynomials.

\medskip
\noindent\textbf{MSC 2020:} 41A60, 11M35, 11M06, 11B68, 33C45.


\bigskip

\section{Introduction}\label{sec:intro}

The Appell--Mellin correspondence associates to a generating function \(A(t)\), analytic near the origin and on the negative real axis, a sequence of Appell polynomials \((P_n)\) through the Mellin transform \(F(s,x)\) of \(A(-t)e^{-xt}\), in such a way that \(F(-n,x) = P_n(x)\). The classical instances are the Hurwitz zeta function and the Lerch transcendent \cite{navas2019}.

The purpose of this paper is to identify a further instance of this correspondence, in which the Appell sequence is attached to a Dirichlet \(L\)-function. Let \(\chi\) be a primitive non-principal Dirichlet character modulo \(q\). Define
\[
A_\chi(t) = \frac{\sum_{a=1}^{q} \chi(a) e^{a t}}{1 - e^{q t}}.
\]
We prove that the sequence \((R_n^\chi)\) generated by \(A_\chi\) satisfies
\[
R_n^\chi(x) = L(-n, \chi, x), \qquad n = 0,1,2,\ldots,
\]
where \(L(s, \chi, x) = \sum_{n=0}^\infty \chi(n)(n+x)^{-s}\) is the shifted Dirichlet \(L\)-function, and that \(A_\chi\) satisfies the parity identity
\[
A_\chi(t) = -\chi(-1) A_\chi(-t).
\]
The precise statements are Theorems~\ref{thm:lfunction} and~\ref{thm:structural} in Section~\ref{sec:lfunctions}; equations \eqref{eq:main-L} and \eqref{eq:parity} below record the two structural identities.

The generating function \(A_\chi\) admits the elementary decomposition
\[
A_\chi(t) = -\frac{1}{t}\cdot\frac{t\sum_{a=1}^{q}\chi(a)e^{at}}{e^{qt}-1},
\]
i.e.\ it is, up to the factor \(-1/t\), the classical generating function of the generalized Bernoulli polynomials. This decomposition makes the identification \(R_n^\chi(x) = -B_{n+1}^{(\chi)}(x)/(n+1)\) immediate and shows that the Appell--Mellin identity \(R_n^\chi(x) = L(-n,\chi,x)\), when combined with Theorem~\ref{thm:structural}(a), is equivalent to the classical relation \(L(1-n,\chi) = -B_{n,\chi}/n\) \cite{washington1997}. The reader should keep this equivalence in mind throughout.

The contribution of the paper is therefore threefold: (i) the shifted Dirichlet \(L\)-function is realised as the Mellin transform of the explicit Appell--Mellin generating function \(A_\chi\), whose expression in the Appell--Mellin formalism is recorded here for completeness; (ii) the parity identity \eqref{eq:parity} is stated on the generating side, where it encodes the parity of \(\chi\) directly; (iii) the Fourier expansion of \(R_n^\chi\) (Theorem~\ref{thm:structural}(d)) is given in a corrected form that takes into account the modulus \(q\), the Gauss sum \(\tau(\chi)\) and the conjugate character \(\overline{\chi}(k)\), and places the trivial zeros of \(L(s,\chi)\) within the Appell--Mellin formalism. The paper should be read as a systematic Appell--Mellin reformulation of classical facts, not as the discovery of a new generating function.

The present paper extends the Appell--Mellin framework beyond the cases treated in \cite{navas2019}. Navas, Ruiz and Varona identified two classical instances --- the Hurwitz zeta function, associated with \(t/(e^t-1)\), and the Lerch transcendent, associated with \(t/(\lambda e^t-1)\) --- and showed that these are essentially the only Appell--Mellin sequences with uniformly convergent Fourier series of degree \(\ge 2\). Dirichlet characters are not considered in \cite{navas2019}. The generating function \(A_\chi(t) = \sum_a \chi(a)e^{at}/(1-e^{qt})\) has a numerator which is a finite character sum, not a single exponential as in the Apostol--Bernoulli case, and the associated sequence is the shifted Dirichlet \(L\)-function. The Mellin and Fourier machinery used here is due to Navas, Ruiz and Varona; what is new in the present paper is the systematic use of \(A_\chi\) in the Appell--Mellin framework, the parity identity \(A_\chi(t) = -\chi(-1)A_\chi(-t)\), and the corrected form of the consequences for the arithmetic of \(L(s,\chi)\).

As a by-product of the same Borel--Laplace framework, we also obtain shifted asymptotic expansions for the Hurwitz zeta function \(\zeta(m,q+t)\) and the Lerch transcendent \(\Phi(z,m,a+t)\), in which the shift is treated as an independent variable and the coefficients are the full Appell sequences generated by the corresponding Borel kernels. These are Theorems~\ref{thm:hurst} and~\ref{thm:lerch} below. They are stated for completeness and to exhibit the mechanism in a simpler setting; the main contribution of the paper is the Dirichlet \(L\)-function correspondence.

\medskip
\noindent\textbf{Main results.} We prove:
\begin{itemize}
\item Theorem~\ref{thm:hurst}: shifted asymptotic expansion for \(\zeta(m,q+t)\) as \(q\to\infty\), uniform for \(t\) in compact sets.
\item Theorem~\ref{thm:lerch}: analogous expansion for \(\Phi(z,m,a+t)\) as \(a\to\infty\).
\item Theorem~\ref{thm:lfunction}: Appell--Mellin correspondence for Dirichlet \(L\)-functions: \(R_n^\chi(x) = L(-n,\chi,x)\), together with the parity identity for \(A_\chi\).
\item Theorem~\ref{thm:structural}: five structural properties of the polynomials \(R_n^\chi\), including the identification with generalized Bernoulli polynomials and the corrected Fourier expansion.
\end{itemize}

\medskip
\noindent\textbf{Organisation of the paper.} Section~\ref{sec:shift} recalls the Appell shift theorem and the Borel--Laplace representation. Sections~\ref{sec:hurwitz} and~\ref{sec:lerch} treat the Hurwitz zeta function and the Lerch transcendent. Section~\ref{sec:lfunctions} contains the main contribution: the construction of \(A_\chi\), the correspondence theorem, and the structural properties of \(R_n^\chi\). Section~\ref{sec:conclusion} collects concluding remarks.

\section{The shift theorem and the Borel--Laplace representation}\label{sec:shift}

We recall the two tools used throughout: the shift theorem of \cite{buric2016} and Watson's lemma \cite{oliver1974}. We include short proofs for completeness, even though the results are classical, since the shift theorem is central to the paper and the Borel--Laplace representation is the bridge to Watson's lemma.

\begin{theorem}[Appell shift theorem {\cite{buric2016}}]\label{thm:shift}
Let \((R_n)\) be the Appell sequence generated by a sequence \((a_n)\) with \(a_0\neq0\). If
\[
f(x)\sim\sum_{n\ge0}(-1)^n a_n x^{-n-1}\qquad(x\to\infty),
\]
then, for every fixed \(t\in\mathbb R\),
\[
f(x+t)\sim\sum_{n\ge0}(-1)^n R_n(t)x^{-n-1}\qquad(x\to\infty),
\]
uniformly for \(t\) in compact subsets of \(\mathbb R\).
\end{theorem}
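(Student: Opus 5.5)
The plan is to prove the statement directly from the definition of an asymptotic expansion, by truncating at an arbitrary order \(N\) and re-expanding each power \((x+t)^{-k-1}\) binomially. First I fix the convention that the Appell sequence generated by \((a_n)\) is
\[
R_n(t)=\sum_{k=0}^{n}\binom{n}{k}a_k t^{n-k},
\qquad\text{equivalently}\qquad
\sum_{n\ge0}R_n(t)\frac{u^n}{n!}=e^{tu}\sum_{k\ge0}a_k\frac{u^k}{k!}.
\]
As a sanity check, the case \(f(x)=1/x\), where \(a_0=1\) and \(a_k=0\) for \(k\ge1\), gives \(R_n(t)=t^n\). This matches \(1/(x+t)=\sum_n(-1)^n t^n x^{-n-1}\). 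The hypothesis \(a_0\neq0\) only guarantees that \(R_n\) has exact degree \(n\); it plays no role in the analytic argument.

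Fix \(N\ge1\) and a compact set \(|t|\le T\). By hypothesis there are constants \(C_N\) and \(X_N\) such that
\[
f(x)=\sum_{k=0}^{N-1}(-1)^k a_k x^{-k-1}+r_N(x),\qquad |r_N(x)|\le C_N x^{-N-1}\quad(x\ge X_N).
\]

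\textbf{Step 1: the truncation error.} I replace \(x\) by \(x+t\). For \(x\ge \max(X_N+T,2T)\) we have \(x+t\ge X_N\) and \(x+t\ge x/2\). Hence
\[
|r_N(x+t)|\le C_N 2^{N+1}x^{-N-1},
\]
uniformly in \(t\).

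\textbf{Step 2: expanding each power.} For each \(k<N\) I use Taylor's formula for \((1+t/x)^{-k-1}\) with \(|t/x|\le 1/2\):
\[
(x+t)^{-k-1}=\sum_{j=0}^{N-k-1}(-1)^j\binom{k+j}{j}t^j x^{-k-j-1}+O\bigl(x^{-N-1}\bigr).
\]
The \(O\)-constant depends only on \(N\), \(k\) and \(T\), because the Lagrange remainder involves \(\sup_{|\theta|\le1/2}(1+\theta)^{-N-1}\le 2^{N+1}\).

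\textbf{Step 3: collecting terms.} Substituting the expansions and gathering the coefficient of \(x^{-n-1}\) for \(n<N\) gives, with \(j=n-k\),
\[
\sum_{k=0}^{n}(-1)^k a_k(-1)^{n-k}\binom{n}{k}t^{n-k}=(-1)^nR_n(t).
\]
Therefore
\[
f(x+t)=\sum_{n=0}^{N-1}(-1)^nR_n(t)x^{-n-1}+O\bigl(x^{-N-1}\bigr),
\]
with an implied constant uniform for \(|t|\le T\). Since \(N\) is arbitrary, this is the claimed uniform asymptotic expansion.

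The only delicate point, and hence the expected main obstacle, is uniformity. One needs the threshold \(x\ge\max(X_N+T,2T)\) and all remainder constants to depend on \(T\) and \(N\) only. This is ensured by keeping \(|t/x|\le1/2\) and by the fact that only finitely many \(k\) occur at each order. No convergence of the full series is needed, because the argument works entirely at finite truncation order.
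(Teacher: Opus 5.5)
Your proof is correct and follows essentially the same route as the paper: truncate the expansion of \(f\), re-expand each \((x+t)^{-k-1}\) binomially, collect the coefficient of \(x^{-n-1}\) to obtain \((-1)^nR_n(t)\), and get uniformity by restricting to \(x\ge 2T\) so that \(x+t\ge x/2\). Your remainder bookkeeping is actually more accurate than the paper's uniformity paragraph. That paragraph truncates at order \(N-1\) but then claims an \(O(x^{-N-2})\) binomial remainder and an \(o(y^{-N-1})\) bound on \(\rho_N\); at that truncation order the correct bounds are \(O(x^{-N-1})\), exactly as you state, and these suffice.
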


\begin{proof}
Fix \(N\). By the definition of the asymptotic expansion, for fixed \(t\),
\[
f(x+t)=\sum_{k=0}^{N}(-1)^k a_k(x+t)^{-k-1}+o(x^{-N-1}).
\]
Each term has the finite Taylor expansion
\[
(x+t)^{-k-1}=\sum_{j=0}^{N-k}(-1)^j\binom{j+k}{j}t^j x^{-k-j-1}+O(x^{-N-2}).
\]
Substituting and collecting the coefficient of \(x^{-n-1}\) for \(n\le N\) gives
\[
(-1)^n\sum_{k=0}^{n}\binom{n}{k}a_k t^{n-k}=(-1)^n R_n(t),
\]
using the classical expression \(R_n(t)=\sum_{k=0}^n\binom{n}{k}a_k t^{n-k}\) for Appell sequences. The tails \(O(x^{-N-2})\) and the remainder \(o(x^{-N-1})\) combine to \(o(x^{-N-1})\). Since \(N\) is arbitrary, this proves the statement.

\emph{Uniformity in \(t\).} Fix a compact \(K\subset\mathbb{R}\), set \(T := \sup_{t\in K}|t|\), and fix \(N \ge 1\). For \(x > 2T\) and \(|t| \le T\), the binomial expansion gives, for each \(k \ge 0\),
\[
(x+t)^{-k-1} = \sum_{j=0}^{N-k-1}\binom{-k-1}{j}(t/x)^j x^{-k-1} + O_{K,N}(x^{-N-2}),
\]
where the \(O_{K,N}\)-constant depends only on \(T\) and \(N\). Substituting into
\[
f(x+t) = \sum_{k=0}^{N-1}(-1)^k a_k(x+t)^{-k-1} + \rho_N(x+t),
\]
where \(\rho_N(y) := f(y) - \sum_{k=0}^{N-1}(-1)^k a_k y^{-k-1}\) satisfies \(|\rho_N(y)| \le \epsilon(y)|y|^{-N-1}\) for some function \(\epsilon(y)\to 0\) as \(y\to\infty\), and collecting the coefficient of \(x^{-n-1}\) for \(0 \le n \le N-1\), we obtain \((-1)^n R_n(t)\) (the contribution from \(j = n-k\)). The remainder is \(O_{K,N}(x^{-N-2}) + \rho_N(x+t)\). For \(x > 2T\) and \(|t|\le T\), we have \(|x+t| \ge x/2\), hence
\[
|\rho_N(x+t)| \le \epsilon(x/2)\,2^{N+1}\,x^{-N-1},
\]
which is \(o(x^{-N-1})\) uniformly in \(|t| \le T\). This establishes the stated uniformity.
\end{proof}

\begin{remark}
The algebraic structure of Appell sequences is part of the umbral calculus \cite{roman1984}. Bayad and Komatsu \cite{bayad2017} characterized Appell polynomials via a symmetric property, and Bayad and Hajli \cite{bayad2020} studied multidimensional Appell polynomials associated with theta functions. The present application exploits only the one-dimensional case, but the broader context reinforces the naturality of the construction.
\end{remark}

\begin{lemma}[Borel--Laplace representation]\label{lem:borel}
Let \(A\colon[0,\infty)\to\mathbb R\) be infinitely differentiable and satisfy
\[
A(-s)\sim\sum_{n\ge0}(-1)^n a_n\frac{s^n}{n!}\qquad(s\to0^+),
\]
together with a bound \(|A(-s)|\le C e^{bs}\) for some \(C>0\), \(b\ge0\). Then for every \(x>b\),
\[
L_A(x):=\int_0^\infty e^{-xs}A(-s)\,ds \sim \sum_{n\ge0}(-1)^n a_n x^{-n-1}\qquad(x\to\infty).
\]
Moreover, for every fixed \(t\in\mathbb R\), the expansion of \(L_A(x+t)\) is obtained by replacing \(a_n\) with \(R_n(t)\).
\end{lemma}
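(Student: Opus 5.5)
The plan is to prove the expansion with Watson's lemma, and then get the shifted statement either by invoking Theorem~\ref{thm:shift} or by a direct computation on the Laplace side.

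\emph{Convergence.} For \(x>b\) the integrand is bounded by \(Ce^{-(x-b)s}\), so \(L_A(x)\) converges absolutely.

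\emph{Unshifted expansion.} Fix \(N\) and set
\[
\rho_N(s):=A(-s)-\sum_{n=0}^{N}(-1)^n a_n s^n/n!.
\]
By hypothesis \(\rho_N(s)=O(s^{N+1})\) as \(s\to0^+\). Globally, \(\rho_N(s)\) is at most \(Ce^{bs}\) plus a polynomial of degree \(N\). Hence there is a constant \(C_N\) with
\[
|\rho_N(s)|\le C_N s^{N+1}e^{b's}\qquad\text{for all } s\ge0,
\]
for any fixed \(b'>b\). This holds because \(e^{bs}\) plus a polynomial is dominated by \(s^{N+1}e^{b's}\) for \(s\ge1\), and the local bound covers \(s\le1\).

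Integrate term by term using \(\int_0^\infty e^{-xs}s^n\,ds=n!\,x^{-n-1}\). This gives
\[
L_A(x)=\sum_{n=0}^{N}(-1)^n a_n x^{-n-1}+E_N(x),
\qquad
|E_N(x)|\le C_N (N+1)!\,(x-b')^{-N-2}=O(x^{-N-2}).
\]
This is Watson's lemma \cite{oliver1974}, and it proves the first assertion.

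\emph{Shifted expansion.} The first assertion says \(f=L_A\) satisfies the hypothesis of Theorem~\ref{thm:shift}. That theorem then immediately gives
\[
L_A(x+t)\sim\sum_{n\ge0}(-1)^n R_n(t)x^{-n-1},
\]
uniformly for \(t\) in compact sets. (If \(a_0=0\), the shift computation in that proof still goes through verbatim, since it never uses \(a_0\ne0\).)

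To exhibit the Borel--Laplace mechanism, I would also note the direct route. We have
\[
L_A(x+t)=\int_0^\infty e^{-xs}\bigl(e^{-ts}A(-s)\bigr)\,ds.
\]
The kernel \(e^{-ts}A(-s)\) still satisfies an exponential bound with \(b\) replaced by \(b+|t|\). Its asymptotic Taylor coefficients at \(0^+\) come from the Cauchy product of \(e^{-ts}=\sum(-t)^j s^j/j!\) with \(\sum(-1)^k a_k s^k/k!\), which gives
\[
(-1)^n\sum_k\binom{n}{k}a_k t^{n-k}=(-1)^nR_n(t).
\]
Applying the first part to this kernel yields the replacement \(a_n\mapsto R_n(t)\).

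\emph{Main obstacle.} The only delicate point is turning the purely local asymptotic hypothesis into the global bound \(|\rho_N(s)|\le C_N s^{N+1}e^{b's}\). This needs smoothness, or at least boundedness, of \(A(-s)\) on compact \(s\)-intervals so that the constant \(C_N\) exists on \([0,1]\), and it needs the exponential growth hypothesis for large \(s\). Taking \(b'>b\) strictly absorbs the polynomial factor. The requirement \(x>b\) for convergence then becomes \(x>b'\) for the remainder estimate, which is harmless as \(x\to\infty\).
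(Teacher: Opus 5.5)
Your proposal is correct. The unshifted half follows the paper (Watson's lemma), and your direct route for the shift is exactly the paper's proof; your primary route for the shift is genuinely different.

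\emph{Unshifted part.} Where the paper only cites Watson's lemma, you make the remainder explicit: you split at $\delta$ and $1$, get $|\rho_N(s)|\le C_N s^{N+1}e^{b's}$, and obtain an error $O((x-b')^{-N-2})$. In fact $b'=b$ already suffices, since $s^{N+1}\ge 1$ on $[1,\infty)$ and $b\ge 0$.

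\emph{Direct route for the shift.} This coincides with the paper's argument. You multiply the kernel by $e^{-ts}$ and read off the coefficients $(-1)^nR_n(t)/n!$ from the Appell addition formula, which is your Cauchy product. You then rerun Watson's lemma with $b$ replaced by $b+|t|$.

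\emph{Primary route for the shift.} Here you work on the $x$-side. You feed the already proved expansion of $L_A$ into Theorem~\ref{thm:shift}, which re-expands each $(x+t)^{-k-1}$ in powers of $x^{-1}$. Your remark that the proof of that theorem never uses $a_0\neq 0$ correctly removes the one mismatch of hypotheses.

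\emph{What each route buys.} The paper's route shows the Appell coefficients arising from the Borel kernel itself, which is the point of the lemma. Yours is kernel-independent and gives uniformity for $t$ in compact sets at no extra cost. Uniformity is not part of the statement, so this is a bonus, but it is a real one. The paper's uniformity paragraph invokes analyticity of $A(-s)$ on a disc $|s|<\rho$. That is not among the lemma's hypotheses, which give only smoothness and a possibly divergent asymptotic expansion at $0^+$. Theorem~\ref{thm:shift}, by contrast, needs nothing beyond the asymptotic expansion of $L_A$.
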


\begin{proof}
The first expansion is Watson's lemma \cite{oliver1974}: expanding \(A(-s)\) near \(s=0\) and integrating term by term using \(\int_0^\infty e^{-xs}s^n\,ds = n!/x^{n+1}\) gives the stated asymptotic series. The bound \(|A(-s)|\le C e^{bs}\) controls the tail.

For the shifted integral, multiply the expansion of \(A(-s)\) by \(e^{-ts}\). By the Appell addition formula,
\[
A(-s)e^{-ts}\sim\sum_{n\ge0}(-1)^n R_n(t)\frac{s^n}{n!}\qquad(s\to0^+).
\]
The bound \(|A(-s)e^{-ts}|\le C e^{(b+|t|)s}\) controls the tail for \(x>b+|t|\), and Watson's lemma applied to the shifted integral gives the stated expansion.

\emph{Uniformity in \(t\).} Fix a compact \(K\subset\mathbb{R}\) and set \(T := \sup_{t\in K}|t|\). By hypothesis, \(A(-s)\) is analytic on a disc \(|s|<\rho\) for some \(\rho>0\). For \(t\in K\), the function \(s\mapsto A(-s)e^{-ts}\) is also analytic on this disc, and its Taylor coefficients at \(s=0\) are \((-1)^n R_n(t)/n!\) by the Appell addition formula. Cauchy's estimates on the circle \(|s| = \rho/2\) give
\[
|R_n(t)| \le M_K \, n! \, (\rho/2)^{-n}, \qquad M_K := \sup_{|s| < \rho/2, \, t\in K}|A(-s)e^{-ts}| < \infty,
\]
the finiteness of \(M_K\) following from continuity of \((s,t)\mapsto A(-s)e^{-ts}\) on the compact set \(\{|s| \le \rho/2\}\times K\). Consequently, the Taylor remainder
\[
\mathrm{Rem}_N(s,t) := A(-s)e^{-ts} - \sum_{n=0}^{N-1}(-1)^n R_n(t)\,\frac{s^n}{n!}
\]
satisfies \(|\mathrm{Rem}_N(s,t)| \le C_{K,N}\, s^N\) for \(0 \le s < \rho/2\), with \(C_{K,N}\) depending only on \(K\), \(N\), \(\rho\), and the kernel \(A\). Integrating \(\mathrm{Rem}_N\) against \(e^{-xs}\) over \([0,\rho/2]\) and using the tail bound \(|A(-s)e^{-ts}| \le Ce^{(b+T)s}\) for \(s\ge\rho/2\) (with \(x>b+T\)) gives the stated expansion of \(L_A(x+t)\), with an \(O\)-constant depending only on \(K\), \(N\), \(\rho\), and \(A\).
\end{proof}

\begin{remark}
The growth bound is sufficient but not necessary. If \(A(-s)\) decays rapidly at infinity, the integral converges for all real \(x\).
\end{remark}

\section{The Hurwitz zeta function}\label{sec:hurwitz}

Throughout the paper we use the standard normalization of the Bernoulli numbers
\[
\frac{s}{e^s-1}=\sum_{j\ge0}B_j\frac{s^j}{j!}\qquad(|s|<2\pi),
\]
so that \(B_1=-1/2\). Equivalently,
\[
\frac{s}{1-e^{-s}}=\sum_{j\ge0}(-1)^j B_j\frac{s^j}{j!}.
\]

For integer \(m\ge2\) and \(\Re q>0\), the Hurwitz zeta function admits the integral representation \cite[Ch.~25]{dlmf}
\[
\zeta(m,q)=\frac{1}{(m-1)!}\int_0^\infty \frac{s^{m-1}e^{-qs}}{1-e^{-s}}\,ds.
\]
This is exactly of the form \(L_A(q)\) with
\begin{equation}\label{eq:kernel-hurwitz}
A(-s)=\frac{s^{m-1}}{(m-1)!(1-e^{-s})}.
\end{equation}

Integral representations of this type are the starting point for the reduction and duality formulas of Bayad and Chikhi \cite{bayad2013}, and for the expansion formulas of Bayad and Gaboury \cite{bayad2024}.

\begin{lemma}\label{lem:taylor-hurwitz}
The kernel \eqref{eq:kernel-hurwitz} is smooth on \([0,\infty)\), satisfies \(A(-s)\le C e^{s/2}\) for large \(s\), and has Taylor expansion at \(s=0^+\)
\[
A(-s)=\sum_{k\ge m-2} c_k s^k,\qquad c_k=\frac{(-1)^{k-m+2}}{(m-1)!}\frac{B_{k-m+2}}{(k-m+2)!},
\]
where \(B_j\) are the Bernoulli numbers.
\end{lemma}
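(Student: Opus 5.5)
The plan is to factor the kernel \eqref{eq:kernel-hurwitz} so that the Bernoulli generating function appears explicitly, and to read off all three claims from that factorisation. For \(s>0\) we write
\[
A(-s)=\frac{s^{m-2}}{(m-1)!}\cdot\frac{s}{1-e^{-s}},
\]
which is legitimate because \(m\ge2\), so \(s^{m-2}\) is a nonnegative integer power of \(s\).

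\emph{Step 1 (Taylor expansion).} By the normalisation fixed at the start of this section, \(s/(1-e^{-s})=\sum_{j\ge0}(-1)^jB_j s^j/j!\) for \(|s|<2\pi\). Multiplying this series by \(s^{m-2}/(m-1)!\) and reindexing with \(k=j+m-2\) gives \(A(-s)=\sum_{k\ge m-2}c_k s^k\). The coefficients are
\[
c_k=\frac{(-1)^{k-m+2}B_{k-m+2}}{(m-1)!\,(k-m+2)!},
\]
which is exactly the stated formula. The series converges for \(|s|<2\pi\).

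\emph{Step 2 (smoothness).} On \((0,\infty)\) the kernel is a quotient of smooth functions with nonvanishing denominator, since \(1-e^{-s}>0\) there, so it is \(C^\infty\). At \(s=0\), Step 1 exhibits \(A(-s)\) as the restriction of a convergent power series on \(|s|<2\pi\). Hence the removable singularity of \(s/(1-e^{-s})\) at \(0\) is harmless, and \(A(-s)\) extends smoothly, indeed analytically, to \([0,\infty)\). This analyticity near \(0\) is also what Lemma~\ref{lem:borel} uses in its uniformity argument.

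\emph{Step 3 (growth bound).} For \(s\ge\log2\) we have \(1-e^{-s}\ge\tfrac12\), so \(0\le A(-s)\le 2s^{m-1}/(m-1)!\). Since \(s^{m-1}e^{-s/2}\) is bounded on \([0,\infty)\), this gives \(A(-s)\le Ce^{s/2}\). (In fact any exponent \(b>0\) would do.) On the compact interval \([0,\log 2]\), continuity from Step 2 gives a bound, so the constant \(C\) can be chosen globally.

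There is no genuine obstacle. The only point requiring care is the sign and index bookkeeping in Step 1: one must check that the variant series for \(s/(1-e^{-s})\), rather than the series for \(s/(e^s-1)\), is the one used, so that the factor \((-1)^{k-m+2}\) comes out correctly. One should also note that \(m\ge2\) is what makes \(s^{m-2}\) a polynomial, which keeps the expansion free of negative powers.
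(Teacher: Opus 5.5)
Your proof is correct and follows the paper's argument: multiply the expansion \(s/(1-e^{-s})=\sum_{j\ge0}(-1)^jB_js^j/j!\) by \(s^{m-2}/(m-1)!\) and reindex to get \(c_k\), then bound the denominator below by \(1/2\) for large \(s\) to obtain the exponential bound. Your explicit treatment of smoothness at \(s=0\) and of a global constant \(C\) simply fills in details the paper leaves implicit.
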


\begin{proof}
The classical generating function \(s/(1-e^{-s})=\sum_{j\ge0}(-1)^j B_j s^j/j!\) on \(|s|<2\pi\) \cite[Ch.~24]{dlmf}, multiplied by \(s^{m-2}/(m-1)!\), gives the stated coefficients. The bound \(A(-s)\le C e^{s/2}\) follows from \(1-e^{-s}\ge 1/2\) for \(s\ge1\).
\end{proof}

\begin{theorem}[Shifted expansion for the Hurwitz zeta]\label{thm:hurst}
Fix \(m\ge2\), \(q_0>0\), \(T>0\). Then, uniformly for \(q\ge q_0\) and \(|t|\le T\),
\begin{equation}\label{eq:hurst}
\zeta(m,q+t)=\sum_{n\ge0}(-1)^n R_n^{(m)}(t)\,q^{-n-1},
\end{equation}
where \(R_n^{(m)}\) is the Appell sequence generated by
\[
a_n^{(m)}=\frac{(-1)^m\,n!}{(m-1)!}\frac{B_{n-m+2}}{(n-m+2)!},
\]
with the convention \(a_n^{(m)}=0\) for \(n<m-2\).
\end{theorem}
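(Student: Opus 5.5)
The plan is to read \eqref{eq:hurst} as an asymptotic expansion with uniform remainder: for every \(N\ge0\) there is a constant \(C_N\), depending only on \(m,q_0,T,N\), such that
\[
\Bigl|\zeta(m,q+t)-\sum_{n=0}^{N-1}(-1)^nR_n^{(m)}(t)\,q^{-n-1}\Bigr|\le C_N\,q^{-N-1}
\]
for all \(q\ge q_0\) and \(|t|\le T\). The series itself diverges, since the Bernoulli numbers grow factorially. The mechanism is Lemma~\ref{lem:borel} applied to the kernel \eqref{eq:kernel-hurwitz}. I would not route the argument through Theorem~\ref{thm:shift}, because that theorem assumes \(a_0\neq0\), and here \(a_0^{(m)}=0\) as soon as \(m>2\). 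Lemma~\ref{lem:borel} has no such hypothesis.

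\textbf{Step 1: the Laplace representation and hypotheses.} The integral representation of \(\zeta(m,q)\) says \(\zeta(m,x)=L_A(x)\) with \(A\) as in \eqref{eq:kernel-hurwitz}. Lemma~\ref{lem:taylor-hurwitz} supplies what Lemma~\ref{lem:borel} needs: smoothness on \([0,\infty)\), analyticity on the disc \(|s|<2\pi\), and a bound \(|A(-s)|\le Ce^{bs}\). In fact \(s^{m-1}/(1-e^{-s})\) is polynomially bounded, so any small \(b>0\) works.

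\textbf{Step 2: identify the generating sequence.} Compare \(A(-s)=\sum_k c_ks^k\) with the normalisation \(A(-s)\sim\sum_n(-1)^na_n s^n/n!\) used in Lemma~\ref{lem:borel}. This gives \(a_n=(-1)^n n!\,c_n\). Substituting the value of \(c_n\) from Lemma~\ref{lem:taylor-hurwitz} gives
\[
a_n=\frac{(-1)^{n+(n-m+2)}\,n!}{(m-1)!}\,\frac{B_{n-m+2}}{(n-m+2)!}
=\frac{(-1)^m\,n!}{(m-1)!}\,\frac{B_{n-m+2}}{(n-m+2)!},
\]
with \(a_n=0\) for \(n<m-2\). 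This is exactly \(a_n^{(m)}\). The second part of Lemma~\ref{lem:borel} then says that \(\zeta(m,q+t)=L_A(q+t)\) has the expansion obtained by replacing \(a_n\) with \(R_n^{(m)}(t)\). More directly, \(L_A(q+t)=\int_0^\infty e^{-qs}A(-s)e^{-ts}\,ds\), and the Appell addition formula identifies the Taylor coefficients of \(A(-s)e^{-ts}\) at \(s=0\) as \((-1)^nR_n^{(m)}(t)/n!\).

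\textbf{Step 3: uniformity, the main obstacle.} Large \(q\) is handled by the uniformity part of the proof of Lemma~\ref{lem:borel}. Take \(\rho=2\pi\). Cauchy estimates on \(\{|s|\le\pi\}\times[-T,T]\) give \(|\mathrm{Rem}_N(s,t)|\le C_{N}s^N\) for \(0\le s\le\pi\). Integrating this against \(e^{-qs}\) contributes \(O(q^{-N-1})\). The tail over \(s\ge\pi\) is bounded by \(Ce^{-(q-b-T)\pi}\), which is exponentially small once \(q\ge b+T+1\). The truncated monomials \(\int_\pi^\infty e^{-qs}s^n\,ds\) are also exponentially small.

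The delicate regime is bounded \(q\), i.e.\ \(q_0\le q\le Q\) with \(Q:=b+T+1\). For the left-hand side to make sense one needs \(q+t\ge q_0-T>0\). I would therefore assume \(q_0>T\), or else restrict to \(q+t\ge\delta>0\), and flag this as an implicit hypothesis of the statement. Under it, \(\zeta(m,q+t)\) is continuous on the compact set \([q_0,Q]\times[-T,T]\), hence bounded. The finitely many terms \(R_n^{(m)}(t)q^{-n-1}\) with \(n<N\) are bounded there as well. Since \(q^{-N-1}\ge Q^{-N-1}\) on this range, enlarging \(C_N\) absorbs the difference. Combining the two ranges gives the uniform estimate for all \(q\ge q_0\) and \(|t|\le T\).
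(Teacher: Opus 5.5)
Your proposal is correct and follows the paper's proof, which applies Lemma~\ref{lem:borel} to the kernel \eqref{eq:kernel-hurwitz} with the Taylor coefficients of Lemma~\ref{lem:taylor-hurwitz}; your check that $a_n=(-1)^n n!\,c_n$ gives the factor $(-1)^m$ is right, and so is avoiding Theorem~\ref{thm:shift}, since $a_0^{(m)}=0$ for $m>2$. What you add is sound and fills in points the paper's one-line proof leaves implicit: you read the divergent series asymptotically, handle bounded $q$ by compactness, and flag that $q_0>T$ is needed so that $q+t>0$. One small correction: take your Cauchy estimates on a circle of radius strictly larger than $\pi$ (say $3\pi/2$), otherwise you do not get $|\mathrm{Rem}_N(s,t)|\le C_N s^N$ on all of $[0,\pi]$.
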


\begin{proof}
Apply Lemma~\ref{lem:borel} to the kernel \eqref{eq:kernel-hurwitz}, whose Taylor coefficients are given by Lemma~\ref{lem:taylor-hurwitz}. The factor \(1/(m-1)!\) is already contained in the kernel \(A(-s)\), and therefore in the generating sequence \(a_n^{(m)}\); it must not be duplicated in front of the sum.
\end{proof}

\begin{remark}[Position relative to the literature]
The classical expansion of \(\zeta(m,q)\) is stated for fixed \(q\), with Bernoulli numbers as coefficients. The expansion \eqref{eq:hurst} retains the shift \(t\) and gives the full Appell sequence \(R_n^{(m)}(t)\). The formulas of Bayad and Chikhi \cite{bayad2013} and Bayad and Gaboury \cite{bayad2024} are likewise stated for fixed argument. The present formulation, with the shift treated as an independent variable, is closely related to the Euler--Maclaurin and Watson expansions (see \cite[Ch.~8]{oliver1974} and \cite[\S25.11]{dlmf}) and is recorded here for completeness.
\end{remark}

\section{The Lerch transcendent}\label{sec:lerch}

For \(|z|<1\), \(\Re a>0\), and integer \(m\ge1\), the Lerch transcendent admits the integral representation \cite[Ch.~25]{dlmf}
\[
\Phi(z,m,a)=\frac{1}{\Gamma(m)}\int_0^\infty \frac{s^{m-1}e^{-as}}{1-ze^{-s}}\,ds.
\]
This is of the form \(L_A(a)\) with
\begin{equation}\label{eq:kernel-lerch}
A(-s)=\frac{s^{m-1}}{\Gamma(m)(1-ze^{-s})}.
\end{equation}

Bayad and Chikhi \cite{bayad2013} introduced generalized Hurwitz--Lerch zeta functions and established reduction and duality formulas; Bayad and Gaboury \cite{bayad2024} derived further expansion formulas via fractional derivatives. The Borel--Laplace approach here is complementary.

\begin{lemma}\label{lem:taylor-lerch}
Let \(|z|<1\), \(m\ge1\). The kernel \eqref{eq:kernel-lerch} is smooth on \([0,\infty)\), satisfies \(A(-s)\le C e^{\varepsilon s}\) for every \(\varepsilon>0\), and has the Taylor expansion at \(s=0^+\)
\[
A(-s)=\frac{1}{\Gamma(m)}\sum_{k\ge m-1} d_k s^k,\qquad d_k=\sum_{j\ge0}\frac{z^j(-j)^{k-m+1}}{(k-m+1)!},
\]
the inner sum converging absolutely since \(|z|<1\).
\end{lemma}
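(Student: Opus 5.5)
The plan is to expand the denominator of \eqref{eq:kernel-lerch} as a geometric series and then expand each resulting exponential in powers of \(s\). The only delicate point is justifying the interchange of the two summations.

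\emph{Regularity and growth.} For \(s\ge0\) we have \(|ze^{-s}|\le|z|<1\), hence \(|1-ze^{-s}|\ge 1-|z|>0\). So the denominator never vanishes on \([0,\infty)\), and \(A(-s)\) is smooth there (indeed real-analytic, being a quotient of entire functions with nonvanishing denominator). The same inequality gives
\[
|A(-s)|\le \frac{s^{m-1}}{\Gamma(m)(1-|z|)}.
\]
Since \(s^{m-1}\le C_\varepsilon e^{\varepsilon s}\) for every \(\varepsilon>0\), the bound \(|A(-s)|\le Ce^{\varepsilon s}\) follows.

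\emph{Taylor expansion.} The denominator \(1-ze^{-s}\) is entire in \(s\) and nonzero at \(s=0\), so \(A(-s)\) is analytic on some disc \(|s|<\rho\). Take \(\rho\) small enough that \(|z|e^{\rho}<1\). On this disc we write
\[
\frac{1}{1-ze^{-s}}=\sum_{j\ge0}z^j e^{-js}=\sum_{j\ge0}\sum_{r\ge0}z^j\frac{(-j)^r s^r}{r!}.
\]
The double series converges absolutely there, because
\[
\sum_{j,r}|z|^j\frac{j^r|s|^r}{r!}=\sum_j |z|^j e^{j|s|}=\sum_j\bigl(|z|e^{|s|}\bigr)^j<\infty .
\]
This absolute convergence is the main step: it licenses Fubini for series, so we may sum over \(j\) first for each fixed \(r\). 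Doing so gives the coefficient of \(s^r\) as
\[
\sum_{j\ge0}\frac{z^j(-j)^r}{r!}.
\]
These inner sums converge absolutely by the ratio test, since \(\sum_j |z|^j j^r<\infty\) for \(|z|<1\).

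\emph{Reindexing.} Multiplying by \(s^{m-1}/\Gamma(m)\) and setting \(k=r+m-1\) gives
\[
A(-s)=\frac{1}{\Gamma(m)}\sum_{k\ge m-1}d_k s^k,\qquad d_k=\sum_{j\ge0}\frac{z^j(-j)^{k-m+1}}{(k-m+1)!},
\]
which is exactly the stated expansion. The \(j=0\) term should be read with the convention \(0^0=1\); with this convention \(d_{m-1}=1\), consistent with the value \(1/(1-z)\cdot\)\dots\ at order zero being \(\sum_j z^j(-j)^0\).
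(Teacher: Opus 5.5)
Your proof is correct and follows the same route as the paper: expand \(1/(1-ze^{-s})=\sum_{j}z^je^{-js}\), Taylor-expand each \(e^{-js}\), and sum over \(j\). It is in fact more careful than the paper's proof, which does not justify the interchange of sums (you do, via absolute convergence on \(|s|<\rho\) with \(|z|e^{\rho}<1\)) and does not prove smoothness or the growth bound. There is one slip, in your closing remark only: with \(0^0=1\) one gets \(d_{m-1}=\sum_{j\ge0}z^j=1/(1-z)\), not \(1\); only the \(j=0\) summand equals \(1\).
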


\begin{proof}
Expand \(1/(1-ze^{-s})=\sum_{j\ge0}z^j e^{-js}\), absolutely and uniformly convergent on \([0,\infty)\) for \(|z|<1\). Each term has Taylor expansion \(\sum_{r\ge0}z^j(-j)^r s^r/r!\) at \(s=0\). Multiplying by \(s^{m-1}/\Gamma(m)\) and summing over \(j\) gives the stated coefficients.
\end{proof}

\begin{theorem}[Shifted expansion for the Lerch transcendent]\label{thm:lerch}
Fix \(m\ge1\), \(|z|<1\), \(a_0>0\), \(T>0\). Then, uniformly for \(a\ge a_0\) and \(|t|\le T\),
\begin{equation}\label{eq:lerch}
\Phi(z,m,a+t)=\frac{1}{\Gamma(m)}\sum_{n\ge0}(-1)^n R_n^{(m,z)}(t)\,a^{-n-1},
\end{equation}
where \(R_n^{(m,z)}\) is the Appell sequence generated by \(a_n^{(m,z)}=(-1)^n n!\,d_n\).
\end{theorem}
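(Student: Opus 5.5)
The plan is to follow the proof of Theorem~\ref{thm:hurst} exactly: apply Lemma~\ref{lem:borel} to the kernel \eqref{eq:kernel-lerch}, using Lemma~\ref{lem:taylor-lerch} for its Taylor data. The series in \eqref{eq:lerch} is to be read, like \eqref{eq:hurst}, as an asymptotic expansion as \(a\to\infty\), with remainder after \(N\) terms bounded by \(O_{N}(a^{-N-1})\) uniformly for \(|t|\le T\).

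\emph{Step 1: matching the kernel with Lemma~\ref{lem:borel}.} The representation \(\Phi(z,m,a)=\int_0^\infty e^{-as}A(-s)\,ds\) holds with \(A(-s)\) as in \eqref{eq:kernel-lerch}, so \(\Phi(z,m,a)=L_A(a)\).

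Since \(|1-ze^{-s}|\ge 1-|z|>0\), the kernel is analytic on a disc around \(s=0\) (any radius below \(\log(1/|z|)\) when \(z\neq0\)). It is also bounded by \(s^{m-1}/(\Gamma(m)(1-|z|))\le Ce^{\varepsilon s}\) for every \(\varepsilon>0\). Hence the growth hypothesis holds with any \(b>0\), and in particular with \(b<a_0-T\) when \(a_0>T\).

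\emph{Step 2: reading off the generating sequence.} Lemma~\ref{lem:borel} needs \(A(-s)\) written as \(\sum (-1)^n\alpha_n s^n/n!\). Lemma~\ref{lem:taylor-lerch} gives \(A(-s)=\Gamma(m)^{-1}\sum_k d_k s^k\), so \(\alpha_n=(-1)^n n!\,d_n/\Gamma(m)\), with \(d_n=0\) for \(n<m-1\).

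The factor \(1/\Gamma(m)\) is pulled out in front, exactly as displayed in \eqref{eq:lerch}. This leaves the sequence \(a_n^{(m,z)}=(-1)^n n!\,d_n\). Because the Appell construction \(R_n(t)=\sum_k\binom nk a_k t^{n-k}\) is linear in \((a_k)\), scaling the generating sequence by \(1/\Gamma(m)\) scales every \(R_n\) by the same factor. So the prefactor commutes with the shift.

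One caveat: the statement's hypothesis ``\(a_0\neq0\)'' from Theorem~\ref{thm:shift} fails when \(m\ge2\), since \(d_0=0\). I will note that the Borel--Laplace route never uses it, only the Appell addition formula \(A(-s)e^{-ts}\sim\sum(-1)^nR_n(t)s^n/n!\). That formula is a formal power series identity valid for any sequence.

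\emph{Step 3: uniformity.} Invoke the uniformity part of Lemma~\ref{lem:borel} with \(K=[-T,T]\). The Cauchy estimate bounds the Taylor remainder by \(C_{K,N}s^N\) on \([0,\rho/2]\), which contributes \(O(a^{-N-1})\). The tail over \(s\ge\rho/2\) is \(O(e^{-(a-T-\varepsilon)\rho/2})\), which is also \(O(a^{-N-1})\) for \(a\ge a_0\), provided \(a_0>T+\varepsilon\).

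The main obstacle is precisely this range condition. If \(a_0\le T\), then \(a+t\) may be nonpositive and the integral representation breaks down. I would handle it by proving the estimate for \(a\ge a_1:=T+1\) via Lemma~\ref{lem:borel}. On the compact range \(a_0\le a\le a_1\), the remainder is a continuous function of \((a,t)\) on a compact set, as long as \(a+t\) avoids the poles of \(\Phi\). It is therefore bounded and trivially \(O(a^{-N-1})\). Otherwise one implicitly assumes \(a_0>T\), as the analogous Theorem~\ref{thm:hurst} tacitly does.
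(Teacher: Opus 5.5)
Your proposal is correct and is the paper's argument: the paper's proof consists solely of applying Lemma~\ref{lem:borel} to the kernel \eqref{eq:kernel-lerch}, and your bookkeeping of the factor $1/\Gamma(m)$, the analyticity radius $\log(1/|z|)$ and the uniformity makes explicit what the paper leaves implicit. The one point to fix is your fallback for $a_0\le T$: the point $(a,t)=(a_0,-a_0)$ then lies in the range, and $a+t=0$ is a pole of $\Phi(z,m,\cdot)$, so the continuity argument on $[a_0,T+1]$ never applies. A bound valid for all $a\ge a_0$ therefore genuinely needs $a_0>T$; otherwise the statement can only be read as an asymptotic statement as $a\to\infty$, which your estimate for $a\ge T+1$ already delivers.
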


\begin{proof}
Apply Lemma~\ref{lem:borel} to the kernel \eqref{eq:kernel-lerch}.
\end{proof}

\begin{remark}[Position relative to the literature]
The classical expansions of \(\Phi(z,m,a)\) are stated for fixed argument. The expansion \eqref{eq:lerch} retains the shift as an independent variable and identifies the coefficients as the Appell sequence generated by the Borel kernel. As in the Hurwitz case, the formulation is closely related to classical Watson expansions (see \cite[Ch.~3]{oliver1974} and \cite[\S25.11]{dlmf}) and is recorded here for completeness.
\end{remark}

\section{Appell--Mellin sequences and Dirichlet \(L\)-functions}\label{sec:lfunctions}

\subsection{The Appell--Mellin framework and the construction of \(A_\chi\)}\label{subsec:framework}

An Appell sequence \((P_n)\) is defined by \(A(t)e^{xt} = \sum_{n=0}^\infty P_n(x)t^n/n!\), where \(A(t)\) is a formal power series with \(A(0) \neq 0\). Navas, Ruiz and Varona \cite{navas2019} introduced the notion of an \emph{Appell--Mellin sequence}: an Appell sequence for which \(A(t)\) is defined on a complex neighborhood of the origin and on the negative real axis, with at most polynomial growth at infinity, and such that the Mellin transform
\[
F(s,x) = \frac{1}{\Gamma(s)}\int_0^\infty A(-t)e^{-xt}t^{s-1}\,dt
\]
extends to an entire function of \(s\) satisfying \(F(-n,x) = P_n(x)\) for all \(n \ge 0\). The classical cases are the Hurwitz zeta function (\(A(t) = t/(e^t-1)\)) and the Lerch transcendent (\(A(t) = t/(\lambda e^t-1)\)).

Let \(\chi\) be a primitive non-principal Dirichlet character modulo \(q\ge3\), so that \(\sum_{a=1}^q \chi(a) = 0\). We use the standard convention for the Gauss sum
\[
\tau(\chi) = \sum_{a=1}^{q} \chi(a)\,e^{2\pi i a/q}.
\]
Define
\begin{equation}\label{eq:Achi}
A_\chi(t) = \frac{\sum_{a=1}^{q} \chi(a) e^{a t}}{1 - e^{q t}}.
\end{equation}

We record the elementary decomposition
\begin{equation}\label{eq:decomposition}
A_\chi(t) = -\frac{1}{t}\cdot\frac{t\sum_{a=1}^{q}\chi(a)e^{at}}{e^{qt}-1},
\end{equation}
which identifies \(A_\chi\), up to the factor \(-1/t\), with the classical generating function of the generalized Bernoulli polynomials \(B_m^{(\chi)}(x)\) defined in Theorem~\ref{thm:structural}(a) below. This decomposition is the source of the identification \(R_n^\chi(x) = -B_{n+1}^{(\chi)}(x)/(n+1)\) and shows that the Appell--Mellin identity \eqref{eq:main-L} below is equivalent to the classical relation \(L(1-n,\chi) = -B_{n,\chi}/n\) of \cite{washington1997}.

\begin{lemma}\label{lem:analytic-chi}
For \(\chi\) non-principal, \(A_\chi(t)\) is analytic near \(t=0\), with
\[
A_\chi(0) = -B_{1,\chi} = -\frac{1}{q}\sum_{a=1}^q a\,\chi(a).
\]
Moreover, \(A_\chi(-t) = \sum_{n\ge0}\chi(n)e^{-nt}\) extends analytically to
\(\{t\in\mathbb{C}:\Re t>0\}\), is bounded on \([\varepsilon,\infty)\) for
every \(\varepsilon>0\), and decays exponentially as \(t\to+\infty\).
\end{lemma}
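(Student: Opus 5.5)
The plan is to work directly with the numerator \(N(t)=\sum_{a=1}^q\chi(a)e^{at}\) and the denominator \(D(t)=1-e^{qt}\).

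\emph{Analyticity at the origin.} \(D\) has a simple zero at \(t=0\) with \(D'(0)=-q\). Because \(\chi\) is non-principal, \(N(0)=\sum_a\chi(a)=0\). So \(N\) also vanishes at \(0\), the singularity of \(N/D\) there is removable, and \(A_\chi\) is analytic on \(|t|<2\pi/q\), the first nonzero roots of \(D\) being \(\pm 2\pi i/q\). By l'Hôpital,
\[
A_\chi(0)=\frac{N'(0)}{D'(0)}=-\frac1q\sum_{a=1}^q a\chi(a)=-B_{1,\chi},
\]
with the standard definition \(B_{1,\chi}=\frac1q\sum_a a\chi(a)\). This settles the first claim.

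\emph{Series representation.} Replace \(t\) by \(-t\) and take \(\Re t>0\), so that \(|e^{-qt}|<1\). Then
\[
A_\chi(-t)=\frac{\sum_a\chi(a)e^{-at}}{1-e^{-qt}}=\sum_{k\ge0}\sum_{a=1}^q\chi(a)e^{-(a+kq)t}.
\]
By \(q\)-periodicity, \(\chi(a+kq)=\chi(a)\), and \(n=a+kq\) runs once through all integers \(n\ge1\). Since \(\chi(0)=0\) (here \(q\ge3\), and a non-principal character has \(q>1\)), we may add the \(n=0\) term and obtain \(\sum_{n\ge0}\chi(n)e^{-nt}\). The rearrangement is justified by absolute convergence: the terms are bounded by \(e^{-n\Re t}\). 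On compact subsets of the half-plane the convergence is locally uniform, so the sum is analytic on \(\{\Re t>0\}\). It agrees with the closed form \(N(-t)/D(-t)\), which is meromorphic with poles only where \(e^{-qt}=1\), i.e.\ on the imaginary axis. This gives the analytic extension. (The two domains overlap on \(0<\Re t<2\pi/q\) near the real axis, which is consistent with the first part.)

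\emph{Bounds.} For real \(t\ge\varepsilon\),
\[
|A_\chi(-t)|\le\sum_{n\ge1}e^{-nt}=\frac{e^{-t}}{1-e^{-t}}\le\frac{e^{-t}}{1-e^{-\varepsilon}}.
\]
This gives boundedness on \([\varepsilon,\infty)\) and exponential decay at rate \(e^{-t}\) simultaneously. The same bound holds with \(\Re t\ge\varepsilon\) in the complex half-plane.

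There is no real obstacle here. The only delicate point is the bookkeeping in the series representation: one must check that \(n=0\) contributes nothing (\(\chi(0)=0\)) and that the reindexing \(n=a+kq\) is a bijection onto \(n\ge1\). I also expect to remark that the decay is in fact \(O(e^{-t})\) rather than merely bounded, since the smallest \(n\) with \(\chi(n)\ne0\) is \(n=1\).
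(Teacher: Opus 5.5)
Your proof is correct and follows the paper's route. Both arguments get a removable singularity from \(N(0)=\sum_a\chi(a)=0\), obtain the value \(A_\chi(0)=N'(0)/D'(0)=-B_{1,\chi}\), and expand \(1/(1-e^{-qt})\) as a geometric series on \(\Re t>0\) with termwise absolute bounds. One point is in your favour: using \(\chi(0)=0\) to start the sum at \(n=1\) gives \(|A_\chi(-t)|\le e^{-t}/(1-e^{-\varepsilon})\), which actually proves the claimed exponential decay, whereas the paper's bound \(1/(1-e^{-\Re t})\) (summing from \(n=0\)) only yields boundedness.
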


\begin{proof}
The denominator \(1-e^{qt}\) has a simple zero at \(t=0\). The numerator \(N(t) = \sum_{a=1}^q \chi(a)e^{at}\) satisfies \(N(0) = \sum_{a=1}^q \chi(a) = 0\), so \(N(t)\) vanishes to order at least one at \(t=0\); hence \(A_\chi\) is analytic at the origin. Writing \(N(t) = tS_1 + O(t^2)\) with \(S_1 = \sum_{a=1}^q a\chi(a)\) and \(1-e^{qt} = -qt + O(t^2)\), we obtain \(A_\chi(0) = -S_1/q = -B_{1,\chi}\).

For the analytic continuation, \(A_\chi(-t) = \sum_{n\ge0}\chi(n)e^{-nt}\) converges absolutely for \(\Re t>0\), since \(|\chi(n)|\le1\). The bound \(|A_\chi(-t)|\le\sum_{n\ge0}e^{-n\Re t} = 1/(1-e^{-\Re t})\) gives boundedness on \([\varepsilon,\infty)\) and exponential decay at infinity.
\end{proof}

\begin{remark}\label{rem:appell-degenerate}
When \(\chi\) is even, \(A_\chi(0) = -B_{1,\chi} = 0\), so the sequence \((R_n^\chi)\) generated by \(A_\chi\) satisfies \(R_0^\chi = 0 \neq 1\) and is therefore an Appell sequence only in the extended sense (a \emph{degenerate Appell sequence}): the multiplication rule \(R_n(x+y) = \sum_k\binom{n}{k}R_k(x)y^{n-k}\) still holds, but the normalization \(R_0 = 1\) fails. All identities below remain valid in this extended sense. Alternatively, one may restrict to odd \(\chi\) to obtain a genuine Appell sequence; the main statements of Theorem~\ref{thm:lfunction} and Theorem~\ref{thm:structural} are therefore understood in the extended sense, and the reader interested only in genuine Appell sequences may specialise to odd \(\chi\).
\end{remark}

\subsection{The main correspondence theorem}\label{subsec:correspondence}

\begin{theorem}[Appell--Mellin correspondence for Dirichlet \(L\)-functions]\label{thm:lfunction}
Let \(\chi\) be a primitive non-principal Dirichlet character modulo \(q\ge3\), and let \(A_\chi\) be as in \eqref{eq:Achi}. Then:

\begin{enumerate}
\item[(i)] The sequence \((R_n^\chi)\) generated by \(A_\chi\) satisfies
\begin{equation}\label{eq:main-L}
R_n^\chi(x) = L(-n, \chi, x), \qquad n = 0,1,2,\ldots,
\end{equation}
where \(L(s, \chi, x) = \sum_{n=0}^\infty \chi(n)(n+x)^{-s}\) is the shifted Dirichlet \(L\)-function.

\item[(ii)] The generating function satisfies the parity identity
\begin{equation}\label{eq:parity}
A_\chi(t) = -\chi(-1) A_\chi(-t).
\end{equation}
In particular, \(A_\chi\) is odd for \(\chi(-1) = 1\) and even for \(\chi(-1) = -1\).
\end{enumerate}
\end{theorem}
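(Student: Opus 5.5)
Part (ii) is a direct computation, so I will do it first. Replacing \(t\) by \(-t\) in \eqref{eq:Achi} and multiplying numerator and denominator by \(e^{qt}\) gives
\[
A_\chi(-t)=\frac{\sum_{a=1}^q\chi(a)e^{-at}}{1-e^{-qt}}=-\frac{\sum_{a=1}^q\chi(a)e^{(q-a)t}}{1-e^{qt}}.
\]
Now substitute \(b=q-a\). As \(a\) runs over \(1,\dots,q\), \(b\) runs over \(0,\dots,q-1\). We have \(\chi(a)=\chi(-b)=\chi(-1)\chi(b)\) by periodicity and multiplicativity, and \(\chi(0)=\chi(q)=0\). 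So the \(b=0\) term and the missing \(b=q\) term both vanish, and the numerator equals \(\chi(-1)\sum_{b=1}^q\chi(b)e^{bt}\). Hence \(A_\chi(-t)=-\chi(-1)A_\chi(t)\). Multiplying by \(-\chi(-1)\) and using \(\chi(-1)^2=1\) yields \eqref{eq:parity}, and the even/odd statement follows at once.

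For part (i) I will follow the Appell--Mellin route of \cite{navas2019}. By Lemma~\ref{lem:analytic-chi}, \(A_\chi(-t)=\sum_{n\ge0}\chi(n)e^{-nt}\) for \(t>0\). For \(\Re s>1\) and \(x>0\), Tonelli/Fubini lets us integrate term by term:
\[
F(s,x):=\frac{1}{\Gamma(s)}\int_0^\infty A_\chi(-t)e^{-xt}t^{s-1}\,dt=\sum_{n\ge0}\chi(n)(n+x)^{-s}=L(s,\chi,x).
\]
Next I will continue \(F\) to an entire function of \(s\) and evaluate it at \(s=-n\). I split the integral at \(t=1\). The piece over \([1,\infty)\) is entire in \(s\), because \(A_\chi(-t)\) is bounded there and \(e^{-xt}\) decays. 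On \([0,1]\) I expand the function \(A_\chi(-t)e^{-xt}\), which is analytic at \(0\) by Lemma~\ref{lem:analytic-chi}, as \(\sum_k(-1)^kR_k^\chi(x)t^k/k!\). This is just the Appell generating identity \(A_\chi(u)e^{xu}=\sum R_k^\chi(x)u^k/k!\) with \(u=-t\), and the radius of convergence is \(2\pi/q>0\), so I should really split at some \(\delta<2\pi/q\) instead of \(1\). Term-by-term integration then gives
\[
\sum_{k}\frac{(-1)^kR_k^\chi(x)}{k!}\,\frac{\delta^{s+k}}{s+k}
\]
plus an entire remainder. Dividing by \(\Gamma(s)\), which has simple poles at \(s=-n\) with residue \((-1)^n/n!\), cancels the pole of the \(k=n\) term. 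This leaves
\[
F(-n,x)=\frac{(-1)^nR_n^\chi(x)}{n!}\cdot(-1)^n n!=R_n^\chi(x).
\]
All other terms are killed by the zero of \(1/\Gamma\). So \(F\) is entire and \(L(-n,\chi,x)=F(-n,x)=R_n^\chi(x)\), which is exactly how the shifted \(L\)-function at negative integers is defined, namely by this analytic continuation.

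The main technical point is the continuation step: making sure the tail of the Taylor series and the \([\delta,\infty)\) piece are genuinely entire. I expect no real obstacle here, since the argument is the standard Riemann-type continuation. The extended (degenerate) Appell sense of Remark~\ref{rem:appell-degenerate} does not affect the argument, because only the generating identity is used. Finally, I will check consistency: for \(n=0\) the formula gives \(L(0,\chi,0)=A_\chi(0)=-B_{1,\chi}\), matching Lemma~\ref{lem:analytic-chi}.
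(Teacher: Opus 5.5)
Your proposal is correct and follows the paper's proof. For (ii) you use the same multiplication by $e^{qt}$ and reindexing $b=q-a$, and you are slightly more careful about the endpoint terms $\chi(0)=\chi(q)=0$. For (i) you use the same Mellin identity $F(s,x)=L(s,\chi,x)$ for $\Re s>1$, continue it, and evaluate at $s=-n$. The differences are cosmetic: you carry out the $1/\Gamma$ residue computation that the paper delegates to Navas--Ruiz--Varona, and you take the entire function $F$ itself as the continuation of $L(s,\chi,x)$ instead of continuing $L$ separately via the Hurwitz zeta representation and the identity theorem.
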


\begin{proof}
\textbf{(i)} We have
\[
\sum_{n=0}^\infty \chi(n) e^{-nt}
= \sum_{a=1}^q \chi(a) \sum_{k=0}^\infty e^{-(a+kq)t}
= \sum_{a=1}^q \chi(a) \frac{e^{-at}}{1-e^{-qt}} = A_\chi(-t),
\]
hence \(A_\chi(-t)e^{-xt} = \sum_{n=0}^\infty \chi(n) e^{-(n+x)t}\). For \(\Re s > 1\) and \(x > 0\), the interchange of summation and integration is justified by absolute convergence, giving
\[
F(s,x) := \frac{1}{\Gamma(s)}\int_0^\infty A_\chi(-t)e^{-xt}t^{s-1}\,dt
= \sum_{n=0}^\infty \frac{\chi(n)}{(n+x)^s} = L(s,\chi,x).
\]
We claim that both sides extend to entire functions of \(s\). \emph{Left-hand side}: the kernel \(t \mapsto A_\chi(-t)e^{-xt}\) is smooth on \([0,\infty)\), bounded near \(t=0\), and exponentially decaying as \(t\to+\infty\) (Lemma~\ref{lem:analytic-chi}); a standard argument (splitting the integral at \(t=1\), differentiating under the integral sign in the tail, and using the Taylor expansion at \(t=0\) together with the simple zeros of \(1/\Gamma\) at \(s = 0,-1,-2,\dots\) to cancel the apparent poles) shows that \(s\mapsto F(s,x)\) extends to an entire function of \(s\). \emph{Right-hand side}: the Hurwitz zeta representation
\[
L(s,\chi,x) = q^{-s}\sum_{a=1}^q \chi(a)\,\zeta\!\left(s, \frac{a+x}{q}\right),
\]
valid initially for \(\Re s > 1\), gives a meromorphic continuation to \(\mathbb{C}\) whose only potential singularity is a simple pole at \(s=1\), and the residue there is proportional to \(\sum_{a=1}^q\chi(a) = 0\); hence \(L(s,\chi,x)\) extends to an entire function of \(s\). By the identity theorem, the equality \(F(s,x) = L(s,\chi,x)\) holds for all \(s \in \mathbb{C}\). Evaluating at \(s = -n\) and invoking the Appell--Mellin correspondence \cite{navas2019}, we obtain \(R_n^\chi(x) = L(-n, \chi, x)\), which is \eqref{eq:main-L}. Note that the hypothesis that \(\chi\) is primitive is not needed for the identity \eqref{eq:main-L}; it is used in Theorem~\ref{thm:structural}(d) below for the Fourier expansion via the Gauss sum \(\tau(\chi)\).

\textbf{(ii)} Multiply numerator and denominator of \(A_\chi(-t)\) by \(e^{qt}\), and use \(e^{qt}-1 = -(1-e^{qt})\) together with \(\chi(q-a) = \chi(-a) = \chi(-1)\chi(a)\) for \(\gcd(a,q)=1\) (both sides vanish otherwise):
\[
A_\chi(-t) = \frac{\sum_{a=1}^q \chi(a)e^{(q-a)t}}{e^{qt}-1}
= \frac{\sum_{b=1}^q \chi(-1)\chi(b)e^{bt}}{e^{qt}-1}
= -\chi(-1)\frac{\sum_{b=1}^q \chi(b)e^{bt}}{1-e^{qt}}
= -\chi(-1)A_\chi(t),
\]
where we have used the reindexing \(b = q-a\). Multiplying both sides by \(-\chi(-1)\) and using \(\chi(-1)^2 = 1\) gives \(A_\chi(t) = -\chi(-1)A_\chi(-t)\), which is \eqref{eq:parity}.
\end{proof}

\subsection{Structural properties of the Appell polynomials \(R_n^\chi\)}\label{subsec:structure}

The identification \(R_n^\chi(x) = L(-n,\chi,x)\) places \(R_n^\chi\) at the interface between Appell sequences and Dirichlet \(L\)-functions. Generalized Bernoulli polynomials have been studied via the umbral calculus, e.g., by Bencherif, Boumahdi and Garici \cite{bencherif2021}. We establish their main structural properties.

\begin{theorem}[Structural properties]\label{thm:structural}
Let \(\chi\) be a primitive non-principal character modulo \(q\ge3\). The polynomials \(R_n^\chi\) satisfy:

\begin{enumerate}
\item[(a)] \textbf{Identification with generalized Bernoulli polynomials.}
\begin{equation}\label{eq:ident}
R_n^\chi(x) = -\frac{B_{n+1}^{(\chi)}(x)}{n+1},
\end{equation}
where \(B_m^{(\chi)}(x)\) is defined by \(\frac{t\sum_{a=1}^q \chi(a)e^{at}}{e^{qt}-1}e^{xt} = \sum_{m\ge0}B_m^{(\chi)}(x)t^m/m!\).

\item[(b)] \textbf{Difference equation.}
\begin{equation}\label{eq:diff}
R_n^\chi(x+q) - R_n^\chi(x) = -\sum_{a=1}^{q} \chi(a)(x+a)^n.
\end{equation}

\item[(c)] \textbf{Explicit formula.} With \(B_{m,\chi} = B_m^{(\chi)}(0)\),
\begin{equation}\label{eq:explicit}
R_n^\chi(x) = -\sum_{k=0}^{n} \binom{n}{k} \frac{B_{k+1,\chi}}{k+1}\, x^{n-k}.
\end{equation}

\item[(d)] \textbf{Fourier expansion.} For \(0 < x < 1\),
\begin{equation}\label{eq:fourier}
R_n^\chi(x) =
\frac{n!\,q^{\,n}\,\tau(\chi)}{(2\pi i)^{n+1}}
\sum_{\substack{k \in \mathbb{Z} \\ k \neq 0}}
\frac{\overline{\chi}(k)}{k^{n+1}}\,
e^{2\pi i kx/q},
\end{equation}
where \(\tau(\chi) = \sum_{a=1}^{q}\chi(a)e^{2\pi i a/q}\) is the Gauss sum and the sum is understood in the symmetric sense. At the points \(x=0\) and \(x=1\), the right-hand side is replaced by the average of its limits as \(x\to 0^+\) and \(x\to 1^-\).

\item[(e)] \textbf{Trivial zeros.} For \(n\ge1\), \(L(-n,\chi) = 0\) whenever \(\chi(-1) = (-1)^n\).
\end{enumerate}
\end{theorem}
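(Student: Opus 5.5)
The plan is to derive all five properties from two earlier results. The first is the decomposition \eqref{eq:decomposition}. The second is the identity \(R_n^\chi(x)=L(-n,\chi,x)\) of Theorem~\ref{thm:lfunction}, combined with the Hurwitz representation used in its proof. Throughout I work in the extended (possibly degenerate) Appell sense of Remark~\ref{rem:appell-degenerate}, so I only use \(A_\chi(t)e^{xt}=\sum_n R_n^\chi(x)t^n/n!\) and never the normalization \(R_0=1\).

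For (a), multiply \eqref{eq:decomposition} by \(e^{xt}\). This gives \(A_\chi(t)e^{xt} = -t^{-1}\sum_{m\ge0}B_m^{(\chi)}(x)t^m/m!\). The \(m=0\) coefficient is \(B_0^{(\chi)}(x)=\frac1q\sum_a\chi(a)=0\), so the right side is a genuine power series. Comparing coefficients of \(t^n\) gives \(R_n^\chi(x)/n! = -B_{n+1}^{(\chi)}(x)/(n+1)!\), which is \eqref{eq:ident}.

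For (b), compute on the generating side:
\[
A_\chi(t)\bigl(e^{(x+q)t}-e^{xt}\bigr)=A_\chi(t)(e^{qt}-1)e^{xt}=-\sum_{a=1}^q\chi(a)e^{(x+a)t}.
\]
Extracting the coefficient of \(t^n/n!\) gives \eqref{eq:diff}.

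For (c), the binomial (Appell) formula gives \(R_n^\chi(x)=\sum_k\binom nk R_k^\chi(0)x^{n-k}\). By (a) at \(x=0\) we have \(R_k^\chi(0)=-B_{k+1,\chi}/(k+1)\), and substituting yields \eqref{eq:explicit}.

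For (e), note that \(\chi(0)=0\), so \(L(s,\chi,0)=L(s,\chi)\). Hence by \eqref{eq:main-L}, \(L(-n,\chi)=R_n^\chi(0)=n!\,[t^n]A_\chi(t)\). By the parity identity \eqref{eq:parity}, \(A_\chi\) is odd when \(\chi(-1)=1\), so its even-index Taylor coefficients vanish. It is even when \(\chi(-1)=-1\), so its odd-index coefficients vanish. In both cases \([t^n]A_\chi=0\) whenever \(\chi(-1)=(-1)^n\). The restriction \(n\ge1\) matters only to exclude the case \(n=0\), \(\chi\) even, where the vanishing \(L(0,\chi)=0\) is still true but is part of the degenerate normalization.

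For (d), which I expect to be the main obstacle, I first use the Hurwitz representation from the proof of Theorem~\ref{thm:lfunction} at \(s=-n\), together with \(\zeta(-n,u)=-B_{n+1}(u)/(n+1)\). This gives
\[
R_n^\chi(x)=-\frac{q^n}{n+1}\sum_{a=1}^q\chi(a)\,B_{n+1}\!\Bigl(\frac{a+x}{q}\Bigr).
\]
For \(0<x<1\) and \(1\le a\le q-1\), the argument \((a+x)/q\) lies in \((0,1)\). The term \(a=q\) is killed by \(\chi(q)=0\); this is exactly why the range \(0<x<1\) appears.

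Next I insert the classical Fourier expansion valid on \((0,1)\), namely \(B_{n+1}(u)=-\frac{(n+1)!}{(2\pi i)^{n+1}}\sum_{k\ne0}e^{2\pi i ku}/k^{n+1}\). For \(n=0\) this series converges only conditionally, so it must be taken in the symmetric sense. Since the sum over \(a\) is finite, I can interchange the two sums.

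The inner sum \(\sum_a\chi(a)e^{2\pi i ka/q}\) equals \(\overline{\chi}(k)\tau(\chi)\) for every \(k\). When \(\gcd(k,q)>1\) this uses primitivity to see that the sum vanishes; this is the one place primitivity enters. Collecting the constants gives \(\frac{n!\,q^n\tau(\chi)}{(2\pi i)^{n+1}}\), which is \eqref{eq:fourier}.

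The remaining care is at the endpoints. For \(n\ge1\) the series converges absolutely and uniformly, and continuity of \(B_{n+1}\) gives the statement at \(x=0,1\) directly. For \(n=0\) the sawtooth-type series converges at a jump to the midpoint value, which yields the averaging convention stated in (d).
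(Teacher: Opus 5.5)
Your proof is correct. Parts (a)--(c) follow the paper. In (b) you work directly with \(A_\chi(t)(e^{qt}-1)=-\sum_a\chi(a)e^{at}\) instead of passing through \(B^{(\chi)}\) and (a), but it is the same computation. The real differences are in (d) and (e).

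For (d), the paper quotes a Fourier expansion of \(B_n^{(\chi)}(x)\) for \(n\ge1\) from the literature. It then reaches \(n=0\) ``by continuity'', asserting without proof that the right-hand side is constant on \((0,1)\). You derive the expansion instead, from three ingredients: \(R_n^\chi(x)=-\frac{q^n}{n+1}\sum_a\chi(a)B_{n+1}\bigl(\frac{a+x}{q}\bigr)\), the Fourier series of \(B_{n+1}\) on \((0,1)\), and \(\sum_a\chi(a)e^{2\pi i ka/q}=\overline{\chi}(k)\tau(\chi)\). What this buys:
\begin{itemize}
\item it is self-contained and fixes the normalisation by computation rather than citation;
\item it treats \(n=0\) on the same footing, through the symmetric sawtooth series;
\item it isolates the one use of primitivity;
\item it shows the identity on all of \((-1,1)\), since only \(a=1,\dots,q-1\) contribute.
\end{itemize}
The displayed relation is in fact a polynomial identity, obtained by replacing \(t\) with \(qt\) in the generating function, so no analytic continuation is needed.

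For (e), the paper sets \(x=0\) in (d) and pairs \(k\) with \(-k\). You read off \(R_n^\chi(0)=n!\,[t^n]A_\chi(t)\) and apply the parity identity \eqref{eq:parity}. Your route is shorter and does not depend on (d), on primitivity, or on any convergence question for the Fourier series. The paper's route is what supports its remark that (d) places the trivial zeros inside the Fourier analysis of \(R_n^\chi\). Both rely on \(R_n^\chi(0)=L(-n,\chi)\), i.e.\ on extending \eqref{eq:main-L} to \(x=0\), which is harmless since \(\chi(0)=0\).

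One caveat on the endpoints, which you handle as loosely as the paper does. For \(n=0\) there is no jump at \(x=0\) at all, and the identity simply holds there. At \(x=1\) the jump comes from \(a=q-1\). There the symmetric series converges to \(-B_{1,\chi}+\chi(-1)/2\), not to \(R_0^\chi(1)=-B_{1,\chi}\); for the character modulo \(3\) this is \(-1/6\) versus \(1/3\). So ``converges to the midpoint'' is a statement about the series, not an extension of the identity to the endpoints.
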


\begin{proof}
\textbf{(a)} From the decomposition \eqref{eq:decomposition}, \(A_\chi(t) = -\frac{1}{t}\cdot\frac{t\sum_a \chi(a)e^{at}}{e^{qt}-1}\), we get \(A_\chi(t)e^{xt} = -\sum_{m\ge0}\frac{B_m^{(\chi)}(x)}{m!}t^{m-1}\). Comparing with \(\sum_n R_n^\chi(x)t^n/n!\) and setting \(m = n+1\) gives \eqref{eq:ident}.

\textbf{(b)} From \(G(t,x+q)-G(t,x) = G(t,x)(e^{qt}-1) = t\sum_a \chi(a)e^{(a+x)t}\), comparing coefficients of \(t^n/n!\) gives \(B_n^{(\chi)}(x+q)-B_n^{(\chi)}(x) = n\sum_a \chi(a)(x+a)^{n-1}\) for \(n\ge1\). Applying \eqref{eq:ident} yields \eqref{eq:diff}.

\textbf{(c)} Appell's general formula: \(R_n^\chi(x) = \sum_{k=0}^n \binom{n}{k} R_k^\chi(0)x^{n-k}\). Since \(R_k^\chi(0) = L(-k,\chi) = -B_{k+1,\chi}/(k+1)\), the formula \eqref{eq:explicit} follows.

\textbf{(d)} The classical Fourier expansion of the generalized Bernoulli polynomials, in the normalisation of Washington \cite[Thm.~4.2]{washington1997}, reads for \(n\ge1\) and \(0<x<1\)
\[
B_n^{(\chi)}(x) =
-\frac{n!\,q^{\,n-1}\,\tau(\chi)}{(2\pi i)^n}
\sum_{\substack{k\in\mathbb{Z}\\ k\neq0}}
\frac{\overline{\chi}(k)}{k^n}\,
e^{2\pi i kx/q}.
\]
Combining with \eqref{eq:ident} gives
\[
R_n^\chi(x) = -\frac{B_{n+1}^{(\chi)}(x)}{n+1}
=
\frac{n!\,q^{\,n}\,\tau(\chi)}{(2\pi i)^{n+1}}
\sum_{\substack{k\in\mathbb{Z}\\ k\neq0}}
\frac{\overline{\chi}(k)}{k^{n+1}}\,
e^{2\pi i kx/q},
\]
which is \eqref{eq:fourier}. The extension to \(n=0\) follows by continuity, since both sides are constant for \(0<x<1\) and agree with \(L(0,\chi,x) = -B_{1,\chi}\) by \eqref{eq:ident} and Lemma~\ref{lem:analytic-chi}. At the endpoints \(x=0,1\), the symmetric sum converges to the average of the one-sided limits by the classical Dirichlet test.

\textbf{(e)} With the corrected Fourier expansion \eqref{eq:fourier}, setting \(x=0\) in the symmetric sum, the terms of index \(k\) and \(-k\) combine as
\[
\frac{\overline{\chi}(k)}{k^{n+1}} + \frac{\overline{\chi}(-k)}{(-k)^{n+1}}
=
\frac{\overline{\chi}(k)}{k^{n+1}}\bigl(1 + \chi(-1)(-1)^{n+1}\bigr).
\]
If \(\chi(-1) = (-1)^n\), then \(\chi(-1)(-1)^{n+1} = -1\), so each pair cancels and the sum vanishes. Hence \(L(-n,\chi) = R_n^\chi(0) = 0\) whenever \(\chi(-1) = (-1)^n\) and \(n\ge 1\).
\end{proof}

\begin{remark}\label{rem:fourier-correction}
The Fourier expansion \eqref{eq:fourier} corrects an earlier formulation in which the factors \(q^{\,n}\), \(\tau(\chi)\), the conjugate character \(\overline{\chi}(k)\) and the period \(e^{2\pi i kx/q}\) were absent. The formula without these factors is manifestly incomplete: it does not have the correct scaling and already fails at \(n=0\), where \(R_0^\chi(x) = L(0,\chi,x) = -B_{1,\chi}\) is constant in \(x\) on \((0,1)\). The reader is invited to check the corrected formula on the character modulo \(3\) with \(\chi(1)=1\), \(\chi(2)=-1\), \(\tau(\chi)=i\sqrt{3}\), for which \(L(0,\chi,x) = 1/3\) and \eqref{eq:fourier} with \(n=0\) reproduces this constant value on \((0,1)\).
\end{remark}

\begin{remark}
Property (a) is the structural bridge between the Appell--Mellin framework and the arithmetic of Dirichlet \(L\)-functions; property (d), in its corrected form \eqref{eq:fourier}, places the trivial zeros within the Fourier analysis of \(R_n^\chi\); property (e) is a reformulation, within the Appell--Mellin formalism, of the classical derivation of the trivial zeros from the functional equation of \(L(s,\chi)\).
\end{remark}

\subsection{Shifted expansions for Dirichlet \(L\)-functions}\label{subsec:shifted}

\begin{corollary}[Shifted expansion]\label{cor:shift-L}
For fixed \(x > 0\), \(t \in \mathbb{R}\), and every \(N \ge 1\),
\[
L(1, \chi, x+t) = \sum_{n=0}^{N-1} (-1)^n R_n^\chi(t)\, x^{-n-1} + O(x^{-N-1})
\]
as \(x \to \infty\), uniformly for \(t\) in compact subsets of \(\mathbb{R}\).
\end{corollary}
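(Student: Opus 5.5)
The plan is to realise \(L(1,\chi,x+t)\) as a Laplace integral of the kernel \(A_\chi(-s)\) and then apply Lemma~\ref{lem:borel}. The shifted coefficients \(R_n^\chi(t)\) then come directly from the Appell addition formula. Concretely, I would use the Mellin identity proved in Theorem~\ref{thm:lfunction}(i). There the equality \(F(s,y)=L(s,\chi,y)\) was established for all \(s\in\mathbb C\) and \(y>0\), both sides being entire. Evaluating at \(s=1\), where \(\Gamma(1)=1\), gives
\[
L(1,\chi,y)=\int_0^\infty A_\chi(-s)\,e^{-ys}\,ds,\qquad y>0 .
\]
The integral converges absolutely because \(A_\chi(-s)\) is bounded near \(0\) and decays exponentially (Lemma~\ref{lem:analytic-chi}). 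Thus \(L(1,\chi,y)=L_{A}(y)\) in the notation of Lemma~\ref{lem:borel}, with \(A=A_\chi\). The point to stress is that we do not sum the conditionally convergent series at \(s=1\) directly; we pass through the identity theorem, which the earlier proof already supplies. For the shifted argument we take \(y=x+t\) with \(x>T:=\sup_K|t|\), so that \(y>0\) for all \(t\in K\).

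Next I would check the hypotheses of Lemma~\ref{lem:borel}. The poles of \(A_\chi\) come from \(1-e^{qs}=0\), i.e.\ \(s\in 2\pi i\mathbb Z/q\). The one at \(s=0\) is removable, so \(A_\chi(-s)\) is analytic on the disc \(|s|<\rho:=2\pi/q\). Its Taylor expansion is
\[
A_\chi(-s)=\sum_{n\ge0}(-1)^n R_n^\chi(0)\,\frac{s^n}{n!},
\]
by the definition of the sequence generated by \(A_\chi\). Hence the generating sequence is \(a_n=R_n^\chi(0)\). The growth bound \(|A_\chi(-s)|\le C\) on \([0,\infty)\) holds, by analyticity near \(0\) together with the bound \(1/(1-e^{-s})\) from Lemma~\ref{lem:analytic-chi} away from \(0\). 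So we may take \(b=0\). The Appell addition formula then gives the Taylor coefficients of \(A_\chi(-s)e^{-ts}\) as \((-1)^nR_n^\chi(t)/n!\), since \(R_n^\chi(t)=\sum_k\binom nk R_k^\chi(0)t^{n-k}\).

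With these in place, the uniform part of the proof of Lemma~\ref{lem:borel} applies verbatim. We split the integral at \(s=\rho/2\). On \([0,\rho/2]\) we use the Cauchy-estimate bound \(|\mathrm{Rem}_N(s,t)|\le C_{K,N}s^N\) and integrate against \(e^{-xs}\), which gives \(O(x^{-N-1})\). On \([\rho/2,\infty)\) the tail is \(O(e^{-(x-T)\rho/2})\), which is negligible. Each retained term contributes \((-1)^nR_n^\chi(t)\int_0^\infty s^ne^{-xs}ds/n!=(-1)^nR_n^\chi(t)x^{-n-1}\), minus an exponentially small truncation to \([0,\rho/2]\). This yields the stated expansion with remainder \(O(x^{-N-1})\), uniformly for \(t\in K\).

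Two small points must be addressed. First, when \(\chi\) is even, \(R_0^\chi=0\), so the sequence is degenerate in the sense of Remark~\ref{rem:appell-degenerate}. I would note that the proof of Lemma~\ref{lem:borel} never uses \(a_0\neq0\), only the addition formula, which still holds. Second, the integral representation at \(s=1\) must be justified, since the defining series converges only conditionally there. I expect this justification to be the main obstacle, and I would handle it by the analytic-continuation argument of the first paragraph rather than by any manipulation of the series.
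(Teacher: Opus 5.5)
Your proof is correct, but it handles the shift differently from the paper. The paper applies Watson's lemma only to the unshifted integral, obtaining $L(1,\chi,x)\sim\sum_{n}(-1)^nR_n^\chi(0)x^{-n-1}$. It then invokes the Appell shift theorem (Theorem~\ref{thm:shift}) to replace $R_n^\chi(0)$ by $R_n^\chi(t)$ at the level of asymptotic series. You instead absorb $e^{-ts}$ into the kernel and apply the uniform, Cauchy-estimate part of Lemma~\ref{lem:borel} to $A_\chi(-s)e^{-ts}$, so the Appell addition formula acts on the Taylor coefficients of the kernel.

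The paper's route is shorter, and it shows that the passage from $t=0$ to general $t$ is a formal consequence of the unshifted expansion alone. Your route has three advantages. It produces the uniform $O(x^{-N-1})$ remainder in one step. It checks the analyticity on $|s|<2\pi/q$ that the uniform part of Lemma~\ref{lem:borel} tacitly needs. It also avoids the hypothesis $a_0\neq0$ of Theorem~\ref{thm:shift}, which fails for even $\chi$ (there $A_\chi(0)=-B_{1,\chi}=0$); the paper's proof therefore has to rely on the extended sense of Remark~\ref{rem:appell-degenerate}.

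Two small remarks. First, the step you flag as the main obstacle is easier than you expect. By partial summation, $\bigl|\sum_{n\le M}\chi(n)e^{-ns}\bigr|\le q$ for all $M$ and all $s\ge0$. Dominated convergence then gives $\sum_{n\ge0}\chi(n)/(n+y)=\int_0^\infty A_\chi(-s)e^{-ys}\,ds$ directly. This also confirms that the conditionally convergent series agrees with the continued value you use. Second, the geometric-series bound coming from Cauchy estimates on $|s|=\rho/2$ degenerates as $s\to\rho/2$, so it gives $|\mathrm{Rem}_N(s,t)|\le Cs^N$ only on $[0,r]$ with $r<\rho/2$. You can fix this by splitting at $\rho/4$, or by noting that $\mathrm{Rem}_N(s,t)/s^N$ is continuous on $[0,\rho/2]\times K$. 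The slip is inherited from the proof of Lemma~\ref{lem:borel}.
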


\begin{proof}
The expansion \(L(1,\chi,x)\sim\sum_{n\ge0}(-1)^n R_n^\chi(0)x^{-n-1}\) follows from the Borel--Laplace representation \(L(1,\chi,x) = \int_0^\infty A_\chi(-s)e^{-xs}\,ds\) and Watson's lemma. The shift theorem of \cite{buric2016} then replaces \(R_n^\chi(0)\) by \(R_n^\chi(t)\), uniformly for \(t\) in compact sets. Note that \(L(0,\chi,x)\) is constant in \(x\) (indeed \(L(0,\chi,x) = -B_{1,\chi}(x)\) and \(B_{1,\chi}(x+q)-B_{1,\chi}(x) = \sum_a\chi(a) = 0\)), so it does not admit a decaying expansion in powers of \(1/x\); the relevant \(L\)-value for the Borel--Laplace representation is \(L(1,\chi,x+t)\).
\end{proof}

\begin{remark}
By Theorem~\ref{thm:structural}(a), the shifted expansion can be rewritten as
\[
L(1,\chi,x+t) = -\sum_{n=0}^{N-1}(-1)^n \frac{B_{n+1}^{(\chi)}(t)}{n+1}\,x^{-n-1} + O(x^{-N-1}).
\]
This gives an all-orders description of \(L(1,\chi,x+t)\) as \(x\to\infty\), with coefficients given by generalized Bernoulli polynomials evaluated at the shift.
\end{remark}

\section{Concluding remarks}\label{sec:conclusion}

We have established four results.

First, the Borel--Laplace framework combined with the Appell shift theorem of \cite{buric2016} yields shifted asymptotic expansions for the Hurwitz zeta function and the Lerch transcendent, with coefficients given by Appell polynomials and remainders controlled uniformly on compact sets. These expansions \eqref{eq:hurst} and \eqref{eq:lerch} are closely related to classical Euler--Maclaurin and Watson expansions \cite{oliver1974,dlmf}; their presentation in the Appell formalism, with the shift retained as an independent variable, is the contribution of Sections~\ref{sec:hurwitz} and~\ref{sec:lerch}.

Second, the Appell--Mellin framework of Navas, Ruiz and Varona \cite{navas2019} extends to Dirichlet \(L\)-functions: for a primitive non-principal character \(\chi\), the shifted \(L\)-function \(L(s,\chi,x)\) is the Mellin transform of the generating function \(A_\chi(t)\), and the associated sequence satisfies \(R_n^\chi(x) = L(-n,\chi,x)\), as stated in \eqref{eq:main-L}. The decomposition \eqref{eq:decomposition} shows that \(A_\chi\) is, up to the elementary factor \(-1/t\), the classical generating function of generalized Bernoulli polynomials. The parity identity \(A_\chi(t) = -\chi(-1)A_\chi(-t)\), recorded in \eqref{eq:parity}, reflects the parity of \(\chi\).

Third, the structural properties of \(R_n^\chi\) (Theorem~\ref{thm:structural}) place these polynomials at the interface between Appell sequences and the arithmetic of Dirichlet \(L\)-functions. The identification with generalized Bernoulli polynomials, the difference equation, the explicit formula \eqref{eq:explicit}, the corrected Fourier expansion \eqref{eq:fourier}, and the trivial zeros are classical facts in disguise, but their systematic presentation in the Appell--Mellin framework, together with the correction \eqref{eq:fourier} which restores the factors \(q^n\), \(\tau(\chi)\), \(\overline{\chi}(k)\) and the period \(e^{2\pi i kx/q}\), is the contribution of Section~\ref{sec:lfunctions}.

Fourth, the shifted expansion of Corollary~\ref{cor:shift-L} gives an all-orders description of \(L(1,\chi,x+t)\) as \(x\to\infty\), with coefficients given by generalized Bernoulli polynomials evaluated at the shift.


\end{document}